\newenvironment{proof}{\par\noindent{\bf Proof \,}}{$\hfill \Box$\par\bigskip}
\date{\empty}
\newtheorem{thm}{Theorem}[section]
\newtheorem{lem}[thm]{Lemma}
\newtheorem{cor}[thm]{Corollary}
\newtheorem{ex}[thm]{Example}
\begin{document}

\title{Extensions of Square Stable Range One}

\author{H. Chen and M. Sheibani}
\maketitle

\begin{abstract} An ideal $I$ of a ring $R$ is square stable if $aR+bR=R$ with $a\in I,b\in R$ implies that
$a^2+by\in R$ is invertible for a $y\in R$. We prove that an
exchange ideal $I$ of a ring $R$ is square stable if and only if
for any $a\in I$, $a^2\in J(R)$ implies that $a\in J(R)$ if and
only if every regular element in $I$ is strongly regular. Further,
a regular ideal $I$ of a ring $R$ is square stable if and only if
$eRe$ is strongly regular for all idempotents $e\in I$ if and only
if $aR+bR=R$ with $a\in 1+I,b\in R$ implies that $a^2+by\in U(R)$
for a $y\in R$.

{\bf Keywords:} square stable ideal; exchange ideal; regular
ideal.

{\bf 2010 Mathematics Subject Classification:} 15E50, 16D25, 16U99.
\end{abstract}

\section{Introduction}
\vskip4mm Let $R$ be a not necessary unitary ring. Then there is a
canonical unitization $\overline{R}=R\oplus {\Bbb Z}$, with the
multiplication $(a,m)(b,n)=(ab+mb+na, mn)$ for $a,b\in R$ and
$m,n\in {\Bbb Z}$. Evidently, $\overline{R}$ contains $R$ as an
ideal. Recall that a ring with an identity is called to have
square stable range one provided that $aR+bR=R$ implies that
$a^2+by$ is a unit for some $y\in R$. Many interesting properties
of such rings are studied by Dhurana et al. ~\cite{KLW}. The
motivation of this article is to explore the square stable range
one for rings without units. As the preceding observation, we
shall therefore seek a definition which is intrinsic for the
non-unital case.

Let $I$ be an ideal of a ring $R$. We say that $I$ is square stable if
$aR+bR=R$ with $a\in I,b\in R$ implies that $a^2+by\in R$ is
a unit for a $y\in R$. From this, we see that every ideal of a
ring having square stable range one is square stable. Also a ring
$R$ has square stable range one if and only if it is square stable
as an ideal of itself. Let $I$ be an ideal of a commutative ring $R$.
As in square stable range one, we see that $I$ is square
stable if and only if whenever $aR+bR=R$ with $a\in I,b\in R$,
there exists $Y\in M_2(R)$ such that $aI_2+bY\in GL_2(R)$. As many known results on stable range one can not be extended to square stable ideals,
we are focus on those special only for such ideals.

An ideal $I$ of a ring $R$ is an exchange ideal if for any $a\in I$ there exists an idempotent $e\in I$ and $x,y\in I$ such that
$e=ax=a+y-ay$. An ideal $I$ of $R$ is an exchange ideal if and only if
for any $a\in
I$ there exists an idempotent $e\in R$ such that $e\in aR$ and
$1-e\in (1-a)R$~\cite{Ar}. A ring $R$ is an exchange ring if it is exchange as an ideal of itself. Recall that a ring $R$ has
stable range one if $aR+bR=R$ with $a,b\in R$ implies that $a+by\in R$ is
a unit for a $y\in R$. Camillo and Yu proved
that an exchange ring has stable range one if and only if every
regular element in $R$ is unit-regular~\cite[Lemma 1.3.1]{CH}. Here, an element $a\in R$
is (unit) regular if there exists a (unit) $x\in R$ such that
$a=axa$. An element $a\in R$ is called strongly regular if $a\in
a^2R\bigcap Ra^2$. Obviously, $\{$~strongly regular
elements~$\}\subsetneq \{$~unit-regular elements~$\}\subsetneq
\{$~regular elements~$\}$ in a ring $R$. In ~\cite[Theorem 5.8]{KLW}, Khurana
et al. characterized square stable range one for exchange rings,
and they proved that an exchange ring has square stable range one
if and only if every regular element in $R$ is strongly regular. A
natural problem asks that if we can generalize this theorem to square
stable ideals, though the methods of Khurana et al.'s can not be
applied to this case. Fortunately, we see that Khurana-Lam-Wang
Theorem can be extended to such ideals by a completely different
route. We shall prove, in Section 3, that an exchange ideal $I$ of
a ring $R$ is square stable if and only if for any $a\in I$,
$a^2\in J(R)$ implies that $a\in J(R)$ if and only if every
regular element in $I$ is strongly regular.

An ideal $I$ of a ring $R$ is regular if every element
in $I$ is regular. Clearly, every regular ideals of a ring is an
exchange ideals. Recall that $I$ has stable range one if $aR+bR=R$
with $a\in 1+I,b\in R$ implies that $a+by\in R$ is invertible. In
Section 4, we observe that square stable regular ideals possess a
similar characterization. We shall prove that a regular ideal $I$
of a ring $R$ is square stable if and only if $eRe$ is strongly
regular for all idempotents $e\in I$ if and only if $aR+bR=R$ with
$a\in 1+I,b\in R$ implies that $a^2+by\in U(R)$ for a $y\in R$.

Throughout, all rings are associative with an identity, and all
ideals of a ring are two-sided ideals. $J(R)$ and $U(R)$ will
denote the Jacobson radical and the set of all units of a ring
$R$, respectively.

\section{Exchange ideals}

Let $I$ be an ideal of a ring $R$.
If $I\subseteq J(R)$, we easily check that $I$ is a square
stable exchange ideals. Thus, the Jacobson radical and prime radical of any ring are both
square stable exchange ideals~\cite{Ar}. Furthermore, every nil ideal is a square
stable exchange ideal. One easily checks that an ideal $I$ of a ring $R$ is
square stable if and only if for any $a\in I, r\in R$ there exists
$x\in R$ such that $a^2+(1-ar)x\in U(R)$. From this, we claim that
the ring ${\Bbb Z}$ of all integers has no any non-zero square
stable ideal. Let $I=n{\Bbb Z}$ $(n\neq 0,1)$ be a non-zero square
stable ideal of ${\Bbb Z}$. Choose $a=n, r=2n$. Then we have some
$x\in {\Bbb Z}$ such that $a^2+(1-ar)x\in U({\Bbb Z})$. This
implies that $(2x-1)n^2=x\pm 1$. This gives a contradiction as
there is no any integer $n (\neq 0,1)$ satisfying these equations. Here are some pertinent examples.

\begin{ex} Let $R=\left(
\begin{array}{cc}
{\Bbb Z}&{\Bbb Z}\\
0&{\Bbb Z} \end{array} \right)$, and let $I=\left(
\begin{array}{cc}
0&{\Bbb Z}\\
0&0 \end{array} \right)$. Then $I$ is a square stable exchange
ideal of $R$, while $R$ has not square range
one.\end{ex}\begin{proof} Clearly, $I\subseteq J(R)$. Thus, $I$ is
a square stable exchange ideal. As ${\Bbb Z}$ has no square stable range
one~\cite[Proposition 2.1]{KLW}, we easily check that $R$ has no square stable
range one.
\end{proof}

\begin{ex} Let ${\Bbb Z}$ be the ring of all integers. Then $Z[[x]]$ has no square stable one, while $x{\Bbb Z}[[x]]$ is a square stable exchange ideal of ${\Bbb Z}[[x]]$.
\end{ex}\begin{proof} Assume that $Z[[x]]$ has stable range one. Since $3\times 4-11=1$, we can find a $y(x)\in Z[[x]]$ such
that $9-11y(x)\in U\big(Z[[x]]\big)$. We note that $u(x)\in
U\big(Z[[x]]\big)$ if and only if $u(0)=\pm 1$. Hence,
$9-11y(0)=\pm 1$ where $y(0)\in {\Bbb Z}$. But there is no any
integer $y$ as the root of the equations $9-11y=\pm 1$. This gives
a contradiction. Therefore $Z[[x]]$ has no square stable range one.
As $x{\Bbb Z}[[x]]\in J\big({\Bbb Z}[[x]]\big)$, $x{\Bbb Z}[[x]]$ is
a stable range exchange ideal of ${\Bbb Z}[[x]]$, and we are done.
\end{proof}

\begin{lem} \label{31} Let $I$ be a square ideal of a ring $R$, and let $a\in I$. If $a^2\in J(R)$, then $a\in J(R)$.
\end{lem}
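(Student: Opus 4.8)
The plan is to establish $a\in J(R)$ through the one-sided characterization of the Jacobson radical: an element $c$ lies in $J(R)$ precisely when $1-cr$ is right invertible for every $r\in R$ (equivalently, by passing to $R^{op}$, when $1-rc$ is left invertible for every $r$). It therefore suffices to fix an arbitrary $t\in R$ and exhibit a right inverse of $1-at$; feeding $t$ into the reformulated square-stability condition will produce exactly this.

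Concretely, I would first invoke the reformulation noted earlier in this section: since $I$ is square stable, for the given $a\in I$ and for the chosen $r=t$ there exists $x\in R$ with $u:=a^2+(1-at)x\in U(R)$. The decisive step is then to strip off the radical part: rewrite $(1-at)x=u-a^2=u\bigl(1-u^{-1}a^2\bigr)$. Because $a^2\in J(R)$ and $J(R)$ is a (left) ideal, $u^{-1}a^2\in J(R)$, whence $1-u^{-1}a^2\in U(R)$ and so $(1-at)x\in U(R)$. Setting $w=(1-at)x$, the identity $(1-at)\bigl(xw^{-1}\bigr)=1$ displays a right inverse of $1-at$.

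Since $t\in R$ was arbitrary, $1-at$ is right invertible for every $t$, and the radical characterization above yields $a\in J(R)$, finishing the argument.

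I expect the only genuine subtlety to be the bookkeeping of sidedness. The square-stability condition naturally delivers a right inverse of $1-at$ (through the unit $(1-at)x$) rather than a two-sided inverse, so one must be careful to apply exactly the right-invertibility form of the Jacobson radical and not expect full invertibility of $1-at$ directly. The passage from "$u$ is a unit" to "$u-a^2$ is a unit" is the clean engine of the proof and rests on $a^2\in J(R)$ together with $1+J(R)\subseteq U(R)$; everything else is routine.
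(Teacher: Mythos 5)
Your argument is correct and follows essentially the same route as the paper: apply square stability to the comaximal pair $aR+(1-at)R=R$, observe that $a^2\in J(R)$ forces $(1-at)x=u-a^2$ to be a unit, and conclude that $1-at$ is right invertible for every $t$, hence $a\in J(R)$. The paper compresses the middle step into one sentence; your factorization $u-a^2=u\bigl(1-u^{-1}a^2\bigr)$ merely makes that step explicit.
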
 \begin{proof}
Suppose that $a^2\in J(R)$. For any $x\in R$, we have
$aR+(1-ax)R=R$ with $a\in I$. Since $I$ is square stable, we can
find some $y\in R$ such that $u:=a^2+(1-ax)y\in U(R)$. It follows
by $a^2\in J(R)$ that $1-ax\in R$ is right invertible. This
implies that $a\in J(R)$, as desired.\end{proof}

\begin{lem} \label{32} Let $R$ be a ring and $ax+b=1$ with $a,x,b\in R$. If $a\in R$ is unit-regular, then $a+by\in U(R)$ for a $y\in R$.
\end{lem}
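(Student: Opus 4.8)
The plan is to exploit the unit-regularity of $a$ to produce an explicit $y$, rather than to invert $a+by$ by hand. Since $a$ is unit-regular, there is a unit $u\in U(R)$ with $a=aua$. The element $e:=ua$ is then idempotent, because $e^2=ua\,ua=u(aua)=ua=e$, and it satisfies $a=u^{-1}e$ as well as $uy=1-e$ for the candidate below. Rewriting $a$ as the unit $u^{-1}$ times the idempotent $e$ is the structural input I would carry through the whole argument.

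Given $ax+b=1$, I would guess $y:=u^{-1}-a$ and show $a+by\in U(R)$. Since $u$ is a unit, $a+by$ is a unit if and only if $u(a+by)$ is, so I would work with $u(a+by)=ua+uby=e+(ub)y$. Left-multiplying the hypothesis $ax+b=1$ by $u$ gives $ex+ub=u$, hence $ub=u-ex$. Substituting this, and using $uy=u(u^{-1}-a)=1-ua=1-e$, I expect $u(a+by)=e+(u-ex)y=e+uy-exy=1-g$, where $g:=exu^{-1}(1-e)$.

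The crucial observation, and the step I expect to carry the weight, is that $g$ lives in the corner $eR(1-e)$: one has $eg=g$ and $g(1-e)=g$, while $ge=exu^{-1}(1-e)e=0$. Because $(1-e)e=0$, this forces $g^2=0$, so $1-g$ is a unit with inverse $1+g$. Consequently $u(a+by)=1-g\in U(R)$, and therefore $a+by=u^{-1}(1-g)$ is a product of two units, which completes the proof.

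The only delicate point is keeping the sides straight when manipulating the idempotent $e=ua$, so that the identities $a=u^{-1}e$, $ua=e$, $ub=u-ex$, and $uy=1-e$ are each applied on the correct side and the expression genuinely collapses to $1-g$. No appeal to the exchange or square-stable hypotheses is needed here; this lemma is a purely element-wise fact about unit-regular elements, which is exactly why it can serve as an auxiliary tool later.
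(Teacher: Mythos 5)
Your proof is correct: with $u\in U(R)$ satisfying $a=aua$, the element $e=ua$ is idempotent, and for $y=u^{-1}-a$ one indeed gets $u(a+by)=e+(u-ex)y=1-exy$ with $exy=exu^{-1}(1-e)$ square-zero, so $a+by=u^{-1}\bigl(1-exu^{-1}(1-e)\bigr)$ is a product of units. The paper offers no argument of its own here, deferring entirely to~\cite[Lemma 1.3.1]{CH}; your computation is essentially that classical Camillo--Yu argument written out in full, so the approach matches what the citation points to.
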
 \begin{proof}
This is obvious as in the proof of~\cite[Lemma 1.3.1]{CH}.\end{proof}

\begin{thm} \label{33} Let $I$ be an exchange ideal of a ring $R$. Then the following are equivalent:
\end{thm}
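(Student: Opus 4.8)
The three equivalent conditions are, I take it, the ones announced in the abstract: (1) $I$ is square stable; (2) for every $a\in I$, $a^2\in J(R)$ implies $a\in J(R)$; (3) every regular element of $I$ is strongly regular. The plan is to run the cycle $(1)\Rightarrow(2)\Rightarrow(3)\Rightarrow(1)$. The implication $(1)\Rightarrow(2)$ is immediate, since it is precisely Lemma~\ref{31} (reading ``square ideal'' there as ``square stable ideal'').

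For $(2)\Rightarrow(3)$ I would pass to $\overline{R}=R/J(R)$ and first show that every idempotent $e\in I$ is central in $\overline{R}$. Indeed, for any $r\in R$ the elements $er(1-e)$ and $(1-e)re$ lie in $I$ (because $e\in I$) and square to $0\in J(R)$, so (2) forces both into $J(R)$; hence $\overline{er}=\overline{ere}=\overline{re}$ and $\overline{e}$ is central. Now take a regular $a=axa\in I$. The idempotents $f=ax$ and $e=xa$ lie in $I$, so both are central modulo $J(R)$, and a short computation then yields $\overline{a}=\overline{a}^2\,\overline{x}=\overline{x}\,\overline{a}^2$ together with $\overline{ax}=\overline{xa}$; that is, $\overline{a}$ is strongly regular in $\overline{R}$ and its two inner idempotents coincide mod $J(R)$. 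The remaining task is to upgrade this to genuine strong regularity in $R$, and I expect this lifting to be the main obstacle. One route: since $e-f\in J(R)$ the unit $u=1-(e-f)^2\in 1+J(R)$ conjugates $e$ to $f$, so after a conjugation one may assume the left and right idempotents are a single $e\in I$, and then $aR=eR$, $Ra=Re$ force the factor of $a$ to be a unit of the corner $eRe\subseteq I$, hence strongly regular; the delicate point is arranging the conjugation so that $a$ \emph{itself}, not a unit multiple of it, lands in the corner, since multiplication by a unit does not preserve strong regularity. Equivalently, one uses the exchange property to re-select the inner inverse $x$ so that $ax=xa$ outright, and making this selection precise is the crux of the whole theorem.

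For $(3)\Rightarrow(1)$ I would use the reformulation recorded in Section~2: $I$ is square stable iff for all $a\in I$, $r\in R$ there is $x\in R$ with $a^2+(1-ar)x\in U(R)$. Put $b=1-ar$, so that $ar+b=1$ and $aR+bR=R$. Applying the exchange property to $ar\in I$ produces an idempotent $e\in(ar)R\subseteq aR$ with $1-e\in(1-ar)R=bR$, so $e\in I$ and $e=as$, $1-e=bt$ for suitable $s,t$. Using $e$ one reduces to a regular element of $I$ governing $a$ in the corner; by (3) that element is strongly regular, which simultaneously upgrades the coprimality $aR+bR=R$ to $a^2R+bR=R$ and supplies the unit-regular data needed for the last step. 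Lemma~\ref{32}, applied to $a^2$ together with a relation $a^2x_0+b_0=1$ with $b_0\in bR$, then yields $a^2+b_0y\in U(R)$, and absorbing $b_0\in bR$ gives $a^2+by'\in U(R)$, as required.

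In summary, the skeleton is short: $(1)\Rightarrow(2)$ is Lemma~\ref{31}, $(2)\Rightarrow(3)$ rests on the square-zero centrality argument modulo $J(R)$, and $(3)\Rightarrow(1)$ combines the exchange idempotent with Lemma~\ref{32}. The two places that demand real care are the lift from strong regularity in $R/J(R)$ to strong regularity in $R$ in $(2)\Rightarrow(3)$, and the bookkeeping that transports information between $a$ and $a^2$ through the corner in $(3)\Rightarrow(1)$; if the lifting proves awkward, an alternative is to establish $(2)\Leftrightarrow(3)$ by showing both are equivalent to ``$\overline{I}$ has all idempotents central'' and then proving $(3)\Rightarrow(1)$ after reducing modulo $J(R)$ and lifting the resulting unit.
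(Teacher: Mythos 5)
The theorem as stated in the paper actually lists only two conditions --- (1) $I$ is square stable and (2) for $a\in I$, $a^2\in J(R)\Rightarrow a\in J(R)$ --- with the strong-regularity characterization deferred to Theorem~\ref{37}; but the substantive issue is that the one direction that matters, getting from the $J(R)$-condition back to square stability, is exactly where your proposal has two unfilled holes. First, in your $(2)\Rightarrow(3)$ you correctly observe that the square-zero trick makes every idempotent of $I$ central modulo $J(R)$, and hence that a regular $a\in I$ becomes strongly regular in $R/J(R)$; but you then concede that lifting this to strong regularity in $R$ is ``the crux'' and leave it open. It genuinely is a gap: strong regularity modulo $J(R)$ together with regularity in $R$ does not formally yield strong regularity in $R$, and the paper needs a separate argument (Theorem~\ref{37}: $a\in a^2R$ from square stability, $Ra\cong Ra^2$, and Dedekind-finiteness of $aR$ via \cite[Lemma 5.1]{KLW}) to close it. Second, your $(3)\Rightarrow(1)$ asserts without justification that strong regularity of the relevant element ``upgrades $aR+bR=R$ to $a^2R+bR=R$''; in the paper this is the hardest computation (Theorem~\ref{35}, $(2)\Rightarrow(1)$), requiring the auxiliary regular element $p=b(ct)b$, the identity $p^2=bp-ebp\in b^2R+eR$, and the extraction of a second exchange idempotent $f$ before Lemma~\ref{32} can be invoked.

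The paper's actual proof of $(2)\Rightarrow(1)$ sidesteps both difficulties by never leaving $R/J(R)$: after extracting the exchange idempotent $e=bs$, $1-e=(1-b)t$ from $b=1-ax$, it uses the centrality of $\overline{e}$ to show that $\overline{\big((1-e)a\big)^2}$ is unit-regular in $R/J(R)$ (via the standard device $d=1-\big((1-e)a\big)^2c+\big((1-e)a\big)^2$), applies Lemma~\ref{32} there, and then lifts only the resulting unit --- which always lifts modulo $J(R)$ --- to obtain $a^2+bs(y-a^2)\in U(R)$. Your closing remark, that one could instead ``reduce modulo $J(R)$ and lift the resulting unit,'' is essentially this argument; had you developed that fallback rather than the main line, the proof would go through. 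As written, the proposal identifies the right ingredients (Lemma~\ref{31}, the square-zero centrality observation, Lemma~\ref{32}) but does not constitute a complete proof of either leg of the path from (2) back to (1).
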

\begin{enumerate}
\item [(1)]{\it $I$ is square stable.}
\vspace{-.5mm}
\item [(2)]{\it For any $a\in I$, $a^2\in J(R)\Longrightarrow a\in J(R)$.}
\end{enumerate}
\begin{proof}  $(1)\Rightarrow (2)$ This is obvious by Lemma~\ref{31}.

$(2)\Rightarrow (1)$ Let $f\in R$ be an idempotent. Then
$\big(fa(1-f)\big)^2=0\in J(R)$. By hypothesis, $fa(1-f)\in J(R)$.
Hence, $\overline{fa}=\overline{faf}$. Likewise,
$\overline{af}=\overline{faf}$. Thus,
$\overline{fa}=\overline{af}$ in $R/J(R)$.

Now let $ax+b=1$ for $a\in I, x, b\in R$ then $b=1-ax\in 1+I$. As
$I$ is an exchange ideal, there exists an idempotent $e \in R$
such that $e=bs$ and $1-e=(1-b)t$ for some $s, t\in R$. This
implies that $axt+e=1$, and so $(1-e)axt(1-e)a =(1-e)a$. Thus,
$\overline{(1-e)a(1-e)xt+e}=\overline{1}$ in $R/J(R)$, and then
$\overline{\big((1-e)a\big)^2(xt)^2}+\overline{e}=\overline{1}$.
This shows that
$\overline{((1-e)a)^2}=\overline{\big((1-e)a\big)^2(xt)^2((1-e)a)^2}$.
Set $c=(xt)^2((1-e)a)^2(xt)^2$. Then
$$\overline{((1-e)a)^2}=\overline{\big((1-e)a\big)^2c((1-e)a)^2}~\mbox{and}~\overline{c}=\overline{c\big((1-e)a\big)^2c}.$$
Accordingly,
$\overline{((1-e)a)^2}=\overline{\big((1-e)a\big)^2cd},$
where $d=1-\big((1-e)a\big)^2c+\big((1-e)a\big)^2$. One easily
checks that
$\big(\overline{d}\big)^{-1}=\overline{1-\big((1-e)a\big)^2c+c}\in
R/J(R)$. Hence, $\overline{((1-e)a)^2}\in R/J(R)$ is unit-regular.
By virtue of Lemma~\ref{32}, there exists a $y\in R$ such that
$\overline{(1-e)a^2}+\overline{ey}\in U\big(R/J(R)\big)$. As every
unit lifts modulo $J(R)$, we have a $u\in U(R)$ such that
$(1-e)a^2+ey=u+r$ for a $r\in J(R)$. Therefore
$a^2+bs(y-a^2)=a^2+e(y-a^2)\in U(R)$, as desired.
\end{proof}

As an immediate consequence of Theorem~\ref{33}, we see
that an exchange ring $R$ has square stable range one if and only if $R/J(R)$ is reduced if
and only if $R/J(R)$ is abelian~\cite[Theorem 4.4]{KLW}.

\begin{cor} Let $I$ be an exchange ideal of a ring $R$. Then the following are equivalent:
\end{cor}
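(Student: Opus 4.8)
The plan is to read off from Theorem~\ref{33} that condition (1) is equivalent to the condition
$$(\ast)\qquad \forall\,a\in I:\ a^2\in J(R)\Longrightarrow a\in J(R),$$
and then to prove the remaining equivalence of $(\ast)$ with ``every regular element of $I$ is strongly regular.'' Throughout I write $\overline{x}$ for the image of $x$ in $\overline{R}=R/J(R)$, and I record the two elementary facts that will do the real work. First, exactly as in the proof of Theorem~\ref{33}, $(\ast)$ alone forces $\overline{fa}=\overline{af}$ for every idempotent $f\in R$ and every $a\in I$, because $\big(fa(1-f)\big)^2=0\in J(R)$ gives $fa(1-f)\in J(R)$ and symmetrically $(1-f)af\in J(R)$; no exchange hypothesis is used here. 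Second, if $a=axa$ and $\overline{a}=\overline{a^2}\,\overline{x}$ in $\overline{R}$, then $a\in a^2R$: writing $e=ax$ one has $ea^2=a^2$ and $\overline{e}=\overline{a^2}\,\overline{x}^2$, so $e=a^2x^2+j$ with $j\in J(R)$, and since $ej=j$ one gets $j(1-j)=a^2x^2j$, whence $j\in a^2R$ and $e\in a^2R$, so $aR=eR=a^2R$.

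For $(\ast)\Rightarrow(2)$, let $a\in I$ be regular, say $a=axa$. Both $e=ax$ and $f=xa$ are idempotents lying in $I$, and $ea=af=a$. Applying the commutation fact to $e$ and to $f$ gives $\overline{a}=\overline{ea}=\overline{ae}=\overline{a^2}\,\overline{x}$ and $\overline{a}=\overline{fa}=\overline{xa^2}=\overline{x}\,\overline{a^2}$. The absorption fact now yields $a\in a^2R$, and its left--right mirror (using $f$, the identity $a^2f=a^2$, and $(1-j')j'=j'x^2a^2$) yields $a\in Ra^2$; hence $a$ is strongly regular. Note that this direction needs nothing beyond $(\ast)$, so the exchange hypothesis is really spent only on the converse.

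The hard direction is $(2)\Rightarrow(\ast)$, and here the exchange property of $I$ is essential. Given $a\in I$ with $a^2\in J(R)$, I would use the exchange axiom to choose an idempotent $e\in I$ with $e\in aR$ and $1-e\in(1-a)R$, so that $\overline{a}\,\overline{e}=0$, and then manufacture honest regular elements of $I$ out of the square-zero element $\overline{a}$: writing $e=ar$, the element $\overline{ara}$ is regular in $\overline{R}$ (with inner inverse $\overline{r}$, since $\overline{e}$ is idempotent) and squares to zero, so by $(2)$---lifted to $\overline{I}$, using that regular elements lift modulo $J(R)$ in an exchange ideal---it is strongly regular, and a strongly regular square-zero element vanishes. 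The engine is the observation that a strongly regular element killed by squaring must be $0$; the difficulty is that a single exchange idempotent only controls $\overline{a}$ on one side, so the manufactured witnesses must be assembled to pin $\overline{a}$ down completely. I expect the cleanest way past this obstacle is to pass to the corner ring $eRe$ (which is an exchange \emph{ring}, as $e\in I$), observe that every regular element of $eRe$ is strongly regular, and invoke the cited Khurana--Lam--Wang Theorem~\cite[Theorem 4.4]{KLW} to conclude that $eRe/J(eRe)$ is reduced, and then transfer this reducedness back along the exchange decomposition to force $\overline{a}=0$. Controlling this left/right transfer is the main technical hurdle, and it is exactly where the exchange structure, rather than mere square stability, is used.
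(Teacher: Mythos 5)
You have proved a different statement from the one in this corollary. The two conditions being shown equivalent here are (1) ``$I$ is square stable'' and (2) ``for any $a\in I$ and any idempotent $e\in R$, $ae-ea\in J(R)$'' --- not the equivalence with ``every regular element of $I$ is strongly regular,'' which is the content of Theorem~\ref{37}. Your ``commutation fact'' --- that the condition $a^2\in J(R)\Rightarrow a\in J(R)$ forces $\overline{fa}=\overline{af}$ for every idempotent $f\in R$ and every $a\in I$, via $\big(fa(1-f)\big)^2=0$ and $\big((1-f)af\big)^2=0$ --- is in fact exactly the paper's proof of $(1)\Rightarrow(2)$ of the present corollary, so that piece of your toolkit lands on target. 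But you never address the converse $(2)\Rightarrow(1)$ for the commutator condition. In the paper this is done by taking the exchange idempotent $e$ with $e=bs$ and $1-e=(1-b)t$, using the hypothesis to write $(1-e)a=(1-e)a(1-e)+r$ with $r\in J(R)$, deducing that $\overline{(1-e)a^2}$ is an idempotent times a unit modulo $J(R)$, hence unit-regular, and finishing with Lemma~\ref{32}; nothing in your proposal substitutes for this.

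Concerning the argument you did write: the direction ``$a^2\in J(R)\Rightarrow a\in J(R)$ implies every regular $a\in I$ is strongly regular'' is correct and genuinely nicer than the paper's route --- your absorption lemma (with $e=ax$, $j=e-a^2x^2\in J(R)$ and $j(1-j)=a^2x^2j$, forcing $e\in a^2R$ and hence $a\in a^2R$, plus its mirror) yields $a\in a^2R\cap Ra^2$ directly, where the paper's Theorem~\ref{37} passes through a Dedekind-finiteness argument on $aR$. However, the converse you sketch is not a proof: you yourself flag the left/right transfer as ``the main technical hurdle'' and do not carry it out, and the step of lifting regular elements of $R/J(R)$ to regular elements of $I$ is asserted rather than justified. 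So even read as an attempt at Theorem~\ref{37}, the proposal is incomplete in one direction; read as an attempt at this corollary, it proves neither implication of the stated equivalence.
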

\begin{enumerate}
\item [(1)]{\it $I$ is square stable.}
\vspace{-.5mm}
\item [(2)]{\it For any $a\in I$ and idempotent $e\in R$, $ae-ea\in J(R)$.}
\end{enumerate}
\begin{proof} $(1)\Rightarrow (2)$ For any $a\in I$ and idempotent $e\in R$, we see that
$\big(ea(1-e)\big)^2=0\in J(R)$. It follows by Theorem~\ref{33}
that $ea-eae\in J(R)$. Likewise, $eae-ae\in J(R)$. Hence,
$ae-ea=(ae-eae)+(eae-ea)\in J(R)$.

$(2)\Rightarrow (1)$ Given $ax+b=1$ with $a\in I,x,b\in R$, there
exists an idempotent $e\in R$ such that $e=bs$ and $1-e=(1-b)t$
for some $s,t\in R$. Hence, $(1-e)axt+e=1$. By hypothesis, we have
some $r\in J(R)$ such that $(1-e)a=(1-e)a(1-e)+r$. Hence,
$(1-e)a^2(xt)^2+e=1-rxt\in U(R)$. Hence,
$(1-e)a^2(xt)^2(1-rxt)^{-1}+e(1-rxt)^{-1}=1$. This shows that
$e(1-rxt)^{-1}=e$, and so $(1-e)a^2(xt)^2(1-rxt)^{-1}+e=1$. It
follows that $(1-e)a^2(xt)^2(1-rxt)^{-1}(1-e)a^2=(1-e)a^2$. As
$(1-e)a^2(xt)^2(1-rxt)^{-1}\in R$ is an idempotent, we see that
$\overline{(1-e)a^2}\in
\big(\overline{(1-e)a^2}\big)^2\big(R/J(R)\big)\bigcap
\big(R/J(R)\big)\big(\overline{(1-e)a^2}\big)^2$, and so
$\overline{(1-e)a^2}\in R/J(R)$ is strongly regular. Hence, it is
unit-regular~\cite{MM}. As $I$ is an exchange
ideal, we have an idempotent $f\in R$ and a unit $u\in R$ such
that $\overline{(1-e)a^2}=\overline{fu}$. Thus, we can find some
$s\in J(R)$ such that
$fu(xt)^2(1-rxt)^{-1}+e=1-s(xt)^2(1-rxt)^{-1}.$ It follows that
$fu(xt)^2(1-rxt)^{-1}\big(1-s(xt)^2(1-rxt)^{-1}\big)^{-1}+e\big(1-s(xt)^2(1-rxt)^{-1}\big)^{-1}=1.$
As $fu\in R$ is unit-regular, it follows by Lemma~\ref{32} that
$fu+e\big(1-s(xt)^2(1-rxt)^{-1}\big)^{-1}z\in U(R).$ Therefore
$(1-e)a^2+e\big(1-s(xt)^2(1-rxt)^{-1}\big)^{-1}z\in U(R).$ Consequently,
$a^2+bs\big((1-s(xt)^2(1-rxt)^{-1}\big)^{-1}z)-a^2\big)\in
U(R),$ hence the result.\end{proof}

The following result will play an important role in the proof of the main result in this section.

\begin{thm} \label{35} Let $I$ be an exchange ideal of a ring $R$. Then the following are equivalent:
\end{thm}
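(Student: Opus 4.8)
The plan is to route the whole argument through the quotient $R/J(R)$, in which the exchange structure is semiprimitive and strong regularity can be detected by idempotents, and then to transfer conclusions back to $R$ by the exchange property. The basic device I would keep in play is that for a regular $a=aba\in I$ the two supports $e=ab$ and $f=ba$ are idempotents of $I$ satisfying $ea=a=af$, so that the left and right supports of $a$ appear as idempotents of the ideal itself.

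For the implication from the radical condition (as in Theorem~\ref{33} and its Corollary) to strong regularity of every regular element, I would take $a=aba\in I$ and first work modulo $J(R)$. Since $\big(ea(1-e)\big)^2=0\in J(R)$, the radical condition forces every element of $I$ to commute modulo $J(R)$ with every idempotent of $R$; inserting $e=ab$ and $f=ba$ gives $\overline{a}\in\overline{a}^2\,(R/J(R))\cap (R/J(R))\,\overline{a}^2$, so $\overline{a}$ is strongly regular in $R/J(R)$. It then remains to lift: writing $\overline{a}=\overline{g}\,\overline{v}=\overline{v}\,\overline{g}$ with $\overline{g}$ an idempotent commuting with a unit $\overline{v}$, I would use that $I$ is exchange to lift $g$ to an idempotent of $I$ and $v$ to a unit of $R$, reassemble a strongly regular element of $I$, and identify it with $a$ through the regular structure $a=ea=af$.

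For the converse I would argue contrapositively after reducing modulo $J(R)$. If some $a\in I$ has $a^2\in J(R)$ but $a\notin J(R)$, then $\overline{a}\neq 0$ while $\overline{a}^2=0$ in the semiprimitive exchange ideal $\overline{I}$. Using the exchange property of $\overline{I}$ I would extract an idempotent from $\overline{a}$ and pass to the corresponding Peirce decomposition, where a nonzero square-zero element manufactures a regular element of $\overline{I}$ that is not strongly regular, since a strongly regular element of square zero must vanish. Lifting this regular element back into $I$ by the exchange property yields a regular element of $I$ that is not strongly regular, contradicting the hypothesis; hence $a\in J(R)$.

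The step I expect to be the main obstacle is the lifting in the first implication. Strong regularity is not stable under arbitrary lifting modulo an ideal, so the exchange hypothesis on $I$ must be used essentially to lift the supporting idempotent $g$ and the unit $v$ simultaneously while preserving the commutation $gv=vg$ that encodes strong regularity; the computation in $R/J(R)$ is itself routine once the Corollary is available, so it is exactly this compatibility of the two lifts --- and not either semiprimitive calculation --- that carries the theorem.
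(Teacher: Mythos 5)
Your forward direction is essentially fine, and in fact cleaner than the paper's: once Theorem~\ref{33} and its Corollary give $\overline{ae}=\overline{ea}$ in $R/J(R)$ for every $a\in I$ and every idempotent $e$, substituting $e=ab$ and $f=ba$ for a regular $a=aba\in I$ yields $\overline{a}=\overline{a^2b}=\overline{ba^2}$, which is exactly condition (2). (The paper instead gets $a\in a^2R$ directly from square stability and then kills $a-xa^2$ by a nilpotency argument with two applications of Theorem~\ref{33}.) But note that (2) only asserts that $\overline{a}$ is strongly regular \emph{in $R/J(R)$}, not that $a$ is strongly regular in $R$; the entire lifting step you identify as ``the main obstacle'' is not part of this theorem. (Strong regularity of $a$ itself is Theorem~\ref{37}, proved by a separate Dedekind-finiteness argument; and your proposed lift of $\overline{g}$ and $\overline{v}$ would in any case only produce a strongly regular element congruent to $a$ modulo $J(R)$, which cannot simply be ``identified with $a$''.)

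The genuine gap is in your converse. You reduce to showing that $\overline{a}\neq 0$ with $\overline{a}^{\,2}=0$ in $\overline{I}\subseteq R/J(R)$ ``manufactures a regular element of $\overline{I}$ that is not strongly regular,'' but this is precisely the hard content of the implication and you do not supply the construction. A square-zero element of an exchange ideal need not be regular, and the natural attempt fails: writing the exchange data $g=\overline{a}\,\overline{x}=\overline{a}+\overline{y}-\overline{a}\,\overline{y}$, the element $p=\overline{a}\,\overline{x}\,\overline{a}$ is regular ($p\overline{x}p=p$) with $p^2=\overline{a}\,\overline{x}\,\overline{a}^{\,2}\,\overline{x}\,\overline{a}=0$, so your hypothesis forces $p=0$, whence $g=g^2=p\overline{x}=0$ and $1=(1-\overline{a})(1-\overline{y})$. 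This only shows $1-\overline{a}$ is invertible, which is far weaker than $\overline{a}=0$; to get $a\in J(R)$ you would need $1-r\overline{a}$ invertible for all $r$, and $(r\overline{a})^2$ is no longer zero, so the argument does not iterate. The paper sidesteps this entirely: its proof of $(2)\Rightarrow(1)$ never passes back through the radical condition, but starts from a unimodular equation $bc+d=1$ with $b\in I$, uses the exchange idempotent to produce the concrete regular elements $p=b(ct)b$ and $(1-f)b^2$ of $I$, applies hypothesis (2) to each, and closes with Lemma~\ref{32} on completing unit-regular elements to units. To salvage your route you would have to prove outright that a semiprimitive exchange ring in which every regular element is strongly regular is reduced --- which is the theorem itself, not an available lemma.
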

\begin{enumerate}
\item [(1)]{\it $I$ is square stable.} \vspace{-.5mm}
\item [(2)]{\it For any regular $a\in I$, $\overline{a}\in R/J(R)$ is strongly regular.}
\end{enumerate}
\begin{proof}  $(1)\Rightarrow (2)$ Let $a\in R$ be regular. Write
$a=axa$ for some $x\in R$. Since $ax+(1-ax)=1$, we can find a
$y\in R$ such that $a^2+(1-ax)y=u\in U(R)$. Hence,
$a^2=axa^2=ax\big(a^2+(1-ax)y\big)=axu$. Thus, $ax=a^2u$, and so
$a=(ax)a=a^2ua$. Therefore $a\in a^2R$, and so $\overline{a}\in
\big(\overline{a}\big)^2\big(R/J(R)\big)$. Write $a=a^2x$ for some
$x\in R$. Hence, $a^2(a-xa^2)=0$. This shows that
$(a-xa^2)^3=a(a-xa^2)(a-xa^2)=a^2(a-xa^2)=0$. Hence,
$(a-xa^2)^4=0\in J(R)$. By using Theorem~\ref{33}, $(a-xa^2)^2\in
J(R)$, and so $a-xa^2\in J(R)$. This shows that $\overline{a}\in
\overline{a^2}\big(R/J(R)\big)\bigcap
\big(R/J(R)\big)\overline{a^2}$. That is, $\overline{a}\in R/J(R)$
is strongly regular.

$(2)\Rightarrow (1)$ Given $bc+d=1$ with $b\in I,c,d\in R$, we see
that $d\in 1+I$, and so we can find an idempotent $e\in R$ such
that $e=ds$ and $1-e=(1-d)t$ for some $s,t\in R$. Hence,
$bct+e=1$. Let $p=b(ct)b$. Then
$p(ct)p=b(ct)b(ct)b(ct)b=b(ct)b=p.$ That is, $p\in I$ is
regular. By hypothesis, we have some $s\in R$ such that
$\overline{p}=\overline{p^2s}$ in $R/J(R)$. Hence,
$\overline{p^2(sct)+e}=\overline{p(ct)+e}=\overline{b(ct)b(ct)+e}=\overline{(1-e)+e}=\overline{1}.$ Since
$p^2=(bctb)p=(1-e)(bp)=bp-ebp\in b^2R+eR$, we see that
$\overline{b^2}\big(R/J(R)\big)+\overline{e}\big(R/J(R)\big)=R/J(R).$
Write $\overline{b^2y+ez}=\overline{1}$ for some $y,z\in R$. Thus,
we can find a $v\in R$ such that $b^2yv+ezv=1$ in $R$. Since
$ezv\in 1+I$, we have an idempotent $f$ such that $f=ezvk$ and
$1-f=(1-ezv)l$ for some $k,l\in R$. It follows that $b^2yvl+f=1$,
and so $(1-f)b^2yvl+f=1$. We infer that $(1-f)b^2\in I$ is
regular.

By hypothesis, $\overline{(1-f)b^2}\in R/J(R)$ is strongly
regular. Thus,
$\overline{(1-f)b^2}\in R/J(R)$ is unit-regular~\cite{MM}. Since
$\overline{(1-f)b^2yl+f}=\overline{1}$, by virtue of
Lemma~\ref{32}, there exists some $t\in R$ such that
$\overline{(1-f)b^2+ft}\in U(R/J(R))$. That is,
$\overline{b^2+f(t-b^2)}\in U(R/J(R))$. As every unit lifts modulo
$J(R)$, we see that $b^2+f(t-b^2)\in U(R)$, and so
$b^2+ezk(t-b^2)\in U(R)$. Therefore $b^2+dszk(t-b^2)\in U(R)$, as
required.
\end{proof}

In view of Theorem~\ref{35}, we show that every exchange ideal of
an alelian ring is square stable. Furthermore, we can enhance
Khurana-Lam-Wang's theorem~\cite[Theorem 5.8]{KLW} as follows:

\begin{cor} Let $R$ be an exchange ring. Then the following are equivalent:
\end{cor}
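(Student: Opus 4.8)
The plan is to read the corollary off Theorem~\ref{35}. Since an exchange ring is an exchange ideal of itself, Theorem~\ref{35} (and likewise Theorem~\ref{33}) applies verbatim with $I=R$. In particular, ``$R$ has square stable range one'' is equivalent, by Theorem~\ref{35}, to ``$\overline{a}\in R/J(R)$ is strongly regular for every regular $a\in R$,'' which already furnishes one of the listed equivalences at no cost. The only genuine task is to link this modulo-$J(R)$ statement to the classical condition that every regular element of $R$ itself be strongly regular (the condition appearing in \cite[Theorem 5.8]{KLW}).

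First I would dispose of the easy implication: if every regular element of $R$ is strongly regular, then so is its image in $R/J(R)$, because the defining relations $a=a^2s$ and $a=ta^2$ are preserved by the quotient map $R\to R/J(R)$. This gives exactly condition $(2)$ of Theorem~\ref{35}, hence square stable range one.

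For the reverse implication I would recycle the computation in the proof of Theorem~\ref{35}, $(1)\Rightarrow(2)$. Starting from a regular $a=axa$, square stability yields a unit $u$ with $a^2=axu$, whence $ax=a^2u^{-1}$ and $a=(ax)a=a^2u^{-1}a$; note that this places $a$ in $a^2R$ inside $R$, not merely modulo $J(R)$. It then remains to upgrade ``$a\in a^2R$'' to full strong regularity by producing the missing relation $a\in Ra^2$. Here I would invoke the classical fact that a von Neumann regular element lying in $a^2R$ is strongly regular: with $p=ax$ and $q=xa$ the associated idempotents one has $aR=pR=a^2R$ and $Ra=Rq$, and regularity is precisely what lets one convert $aR=a^2R$ into $Ra=Ra^2$, so that $a\in a^2R\cap Ra^2$ is strongly regular in $R$. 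Combined with the previous paragraph this closes the cycle. (Alternatively, the equivalence of square stable range one with strong regularity of all regular elements may be quoted directly from \cite[Theorem 5.8]{KLW}, so that the corollary's new content is only the sharper modulo-$J(R)$ formulation coming from Theorem~\ref{35}; any conditions describing $R/J(R)$ as reduced or abelian are then supplied by the remark following Theorem~\ref{33}.)

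I expect the main obstacle to be exactly the final step above, the passage from $a\in a^2R$ to strong regularity in $R$. The square stable hypothesis is one-sided, being phrased through the right ideals $aR+bR=R$, so it is not a priori left--right symmetric and does not by itself deliver $a\in Ra^2$; it is the regularity of $a$ that restores the symmetry and forces $Ra=Ra^2$. Every remaining ingredient is either a direct specialization of Theorem~\ref{35} to $I=R$ or the trivial observation that strong regularity descends along $R\to R/J(R)$.
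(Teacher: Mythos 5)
The corollary as printed lists only two conditions --- (1) $R$ has square stable range one, and (2) $\overline{a}\in R/J(R)$ is strongly regular for every regular $a\in R$ --- so, as you observe in your opening paragraph, the entire proof is the specialization of Theorem~\ref{35} to $I=R$, using that a ring has square stable range one exactly when it is square stable as an ideal of itself. The paper gives no proof for precisely this reason, and on this point your proposal and the paper coincide.

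The remainder of your proposal is aimed at a condition that is not part of this corollary (strong regularity of regular elements in $R$ itself rather than in $R/J(R)$), and the ``classical fact'' you invoke there is false: a von Neumann regular element $a$ with $a\in a^2R$ need not lie in $Ra^2$. Take $R=\mathrm{End}_k(V)$ with $V$ of countably infinite dimension and $a$ the backward shift on a basis: $a$ is regular and surjective, hence $a\in a^2R$, but $\ker(a^2)\not\subseteq\ker(a)$, so $a\notin Ra^2$. Regularity alone does not convert $aR=a^2R$ into $Ra=Ra^2$; some finiteness input is indispensable, which is exactly why the paper's Theorem~\ref{37} interposes the condition that $aR$ be Dedekind-finite (itself deduced from square stability via Theorem~\ref{35}) before concluding strong regularity. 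If you want the unbarred version of condition (2), quote Theorem~\ref{37} or \cite[Theorem 5.8]{KLW} rather than this one-sided-to-two-sided ``lemma.''
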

\begin{enumerate}
\item [(1)]{\it $R$ has square stable range one.} \vspace{-.5mm}
\item [(2)]{\it For any regular $a\in R$, $\overline{a}\in R/J(R)$ is strongly regular.}
\end{enumerate}

We have at our disposal all the information necessary to prove the following result.

\begin{thm} \label{37} Let $I$ be an exchange ideal of a ring $R$. Then the following are equivalent:
\end{thm}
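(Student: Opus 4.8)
The plan is to establish the equivalence of (1) $I$ is square stable and (2) every regular element of $I$ is strongly regular, using Theorem~\ref{35} as the bridge between genuine strong regularity in $R$ and strong regularity modulo $J(R)$. The direction $(2)\Rightarrow(1)$ should be immediate: if every regular $a\in I$ lies in $a^2R\cap Ra^2$, then applying the canonical surjection $R\to R/J(R)$ gives $\overline{a}\in\overline{a}^2(R/J(R))\cap(R/J(R))\overline{a}^2$, so $\overline{a}$ is strongly regular in $R/J(R)$, and Theorem~\ref{35} yields that $I$ is square stable.

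For $(1)\Rightarrow(2)$ I would fix a regular $a\in I$ and write $a=axa$. Two facts are already at my disposal. First, the computation inside the proof of Theorem~\ref{35} shows that square stability by itself forces $a\in a^2R$ \emph{in $R$}: it produces a unit $u$ with $a^2=(ax)u$, whence $ax=a^2u^{-1}$ and $a=axa=a^2(u^{-1}a)$. Second, Theorem~\ref{35} gives that $\overline{a}$ is strongly regular in $R/J(R)$; in particular $(R/J(R))\overline{a}=(R/J(R))\overline{a}^2$, which translates to the containment $Ra\subseteq Ra^2+J(R)$. Thus the whole problem reduces to upgrading this left-hand congruence to the exact relation $a\in Ra^2$.

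The hard part will be precisely this radical-absorption step, and the idempotent $f=xa$ is what I would use to carry it out. Here $af=axa=a$ and $a^2f=a(axa)=a^2$. Since $f\in Ra\subseteq Ra^2+J(R)$, I can write $f=ta^2+j$ with $t\in R$ and $j\in J(R)$. Right-multiplying by $f$ and using $a^2f=a^2$ gives $f=ta^2+jf$, so $(1-j)f=ta^2$; as $j\in J(R)$, the element $1-j$ is a unit, and therefore $f=(1-j)^{-1}ta^2\in Ra^2$. Consequently $a=af=a(1-j)^{-1}ta^2\in Ra^2$.

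Combining $a\in a^2R$ with $a\in Ra^2$ shows that $a$ is strongly regular in $R$, which completes $(1)\Rightarrow(2)$. The only genuinely delicate point is the passage from the congruence $Ra\subseteq Ra^2+J(R)$ to the exact membership $a\in Ra^2$; the idempotent $f$ (which satisfies $af=a$ and $a^2f=a^2$) together with the unit $1-j$ is exactly the device that absorbs the radical term, and everything else is a routine descent through the canonical map and a reuse of the one-sided conclusion already extracted in Theorem~\ref{35}.
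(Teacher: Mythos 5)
Your argument for the equivalence of (1) and (3) is correct, and it takes a genuinely different route from the paper. The paper proves $(1)\Rightarrow(2)\Rightarrow(3)$: it first extracts $a\in a^2R$ and the Dedekind-finiteness of $aR$ (via Theorem~\ref{35} applied inside $eRe$ with $e=ac$), and then in $(2)\Rightarrow(3)$ runs a module-theoretic argument --- the isomorphism $Ra\cong Ra^2$, the decomposition $Ra=Ra^2\oplus(Ra\cap D)$, and a citation of a Dedekind-finiteness lemma from Khurana--Lam--Wang --- to conclude $a\in Ra^2$. You instead obtain $a\in Ra^2$ directly: Theorem~\ref{35} gives $a\in Ra^2+J(R)$, and your radical-absorption step with the idempotent $f=xa$ (using $f\in Ra$, $a^2f=a^2$, $af=a$, and the invertibility of $1-j$ for $j\in J(R)$) cleanly upgrades the congruence to exact membership. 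I checked the computation: $f=ta^2+j$ gives $f=f^2=ta^2f+jf=ta^2+jf$, hence $(1-j)f=ta^2$ and $f\in Ra^2$, so $a=af\in Ra^2$. Combined with the standard extraction of $a\in a^2R$ from square stability, this is a complete and arguably more elementary proof of $(1)\Rightarrow(3)$; your $(3)\Rightarrow(1)$ via Theorem~\ref{35} matches the paper.

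The gap is that the theorem is a three-way equivalence and your proof never addresses condition (2), namely that every regular $a\in I$ satisfies $a\in a^2R$ \emph{and} $aR$ is Dedekind-finite. The first half of (2) does fall out of your argument, but the Dedekind-finiteness of $aR$ is nowhere established, nor is any implication into or out of (2) proved. To complete the proof you would need either to show $(3)\Rightarrow(2)$ (e.g., for $e=au$ with $u$ a unit inner inverse of the strongly regular element $a$, any $x\in eRe$ with a right inverse in $eRe$ is regular in $I$, hence strongly regular, hence invertible in $eRe$) and $(2)\Rightarrow(1)$ or $(2)\Rightarrow(3)$, or to reinstate something like the paper's module argument. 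As it stands, your text proves a two-condition theorem, not the stated one.
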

\begin{enumerate}
\item [(1)]{\it $I$ is square stable.}
\vspace{-.5mm}
\item [(2)]{\it For any regular $a\in I$, $a\in
a^2R$. and $aR$ is Dedekind-finite} \vspace{-.5mm}
\item [(3)]{\it Every regular element in $I$ is strongly regular.}
\end{enumerate}
\begin{proof}  $(1)\Rightarrow (2)$ Suppose $I$ is square stable. For any regular $a\in R$, $a=aca$ for a $c\in
R$. As $aR+(1-ac)R=R$, we have a $y\in R$ such that
$u:=a^2+(1-ac)y\in U(R)$. This shows that
$acu=ac\big(a^2+(1-ac)y\big)=aca^2=a^2$. Hence, $ac=a^2u^{-1}$,
and so $a=a^2u^{-1}a\in a^2R$.

Set $e=ac$. Then $aR=eR$, and so it will suffice to show that
$eRe$ is Dedekind-finite. Given $xy=e$ in $eRe$, then
$\overline{x}\in R/J(R)$ is regular. In light of Theorem~\ref{35},
$\overline{x}\in R/J(R)$ is strongly regular. Write
$\overline{x}=\overline{tx^2}~\mbox{for some}~t\in R.$ We may
assume that $t\in eRe$. As $\overline{xy}=\overline{e}$, we get
$\overline{tx^2y}=\overline{tx}$. Hence,
$\overline{tx}=\overline{e}$. This shows that $e-tx\in J(R)$, and
so $e-tx\in J(eRe)$. It follows that $tx=e-(e-tx)\in U(eRe)$.
Thus, $x\in eRe$ is left invertible. Clearly, $x\in eRe$ is right
invertible. This implies that $x\in U(eRe)$, and then $yx=e$.
Therefore $eRe$ is Dedekind-finite.

$(2)\Rightarrow (3)$ Let $a \in I$ be regular. By hypothesis,
$aR=a^2R$ and $aR$ Dedekind-finite. Construct a map $\varphi:
Ra\to Ra^2, ra\mapsto ra^2$. Then $\varphi$ is an $R$-epimorphism.
If $ra^2=0$, then $ra=0$, and so $\varphi$ is an $R$-monomorphism.
This implies that $Ra\cong Ra^2$. As $a\in R$ is regular, so is
$a^2\in R$. This, $Ra^2$ is a direct summand of $R$. Write
$Ra^2\oplus D=R$. Then $Ra=Ra\bigcap \big(Ra^2\oplus
D\big)=Ra^2\oplus Ra\bigcap D$. In view of ~\cite[Lemma 5.1]{KLW},
$Ra$ is Dedekind-finite. Hence, $D=0$. Therefore, $Ra=Ra^2$, and
then $a\in a^2R\bigcap Ra^2$, as required.

$(3)\Rightarrow (1)$ For any regular $a\in R$, $a\in R$ is
strongly regular. Hence, $\overline{a}\in R/J(R)$ is strongly
regular. In light of Theorem~\ref{35}, we complete the proof.
\end{proof}

As an immediate consequence, we drive that an exchange ring $R$ has square stable range one if and only if
every regular element in $R$ is strongly regular~\cite[Theorem 5.8]{KLW}.

\section{Regular ideals}

In this
section, we explore more explicit characterization of square
stable regular ideals. Such ideals are very enrich.

\begin{ex} If $n=\prod\limits_{i=1}^{m}p_i^{k_i}$ is the prime
power decomposition of the positive integer $n$, and $p_j$ is an
odd prime and $k_j=1$ for at least one $j\in \{ 1,\cdots ,m\}$,
then ${\Bbb Z}_n[i]=\{ a+bi~|~a,b\in {\Bbb Z}_n,i^2=-1\}$ has a nonzero square stable regular
ideal. This is obvious by ~\cite[Corollary
3.12]{O}.\end{ex}

Our starting point is the following technical lemma.

\begin{lem} \label{41}~\cite[Lemma 13.1.19.]{CH} Let $I$ be a regular
ideal of a ring $R$ and $x_1,x_2,\cdots,x_m\in I$. Then there
exists an idempotent $e\in I$ such that $x_i\in eRe$ for all
$i=1,2,\cdots,m$.\end{lem}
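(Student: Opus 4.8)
The plan is to reformulate the conclusion and grow a single idempotent one element at a time. Observe first that $x_i\in eRe$ is equivalent to $ex_i=x_i=x_ie$; thus it suffices to produce an idempotent $e\in I$ that is a two-sided local unit for the set, i.e. $ex_i=x_i=x_ie$ for every $i$. I would argue by induction on $m$ through the following growth statement: for any idempotent $e\in I$ and any $a\in I$ there is an idempotent $e^\ast\in I$ with $ee^\ast=e^\ast e=e$ and $e^\ast a=a=ae^\ast$. Granting this, begin with $e=0$ and adjoin $x_1,\dots,x_m$ in turn; at each stage the relation $ee^\ast=e^\ast e=e$ forces $eRe\subseteq e^\ast Re^\ast$, so the previously captured elements stay captured and the final idempotent two-sidedly fixes all of $x_1,\dots,x_m$.

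For the growth statement I would proceed in two one-sided passes. To make $e$ left-fix $a$, set $b=(1-e)a\in I$, so that $eb=0$. As $I$ is regular I may write $b=bzb$ and put $h=bz$; then $h\in I$ is an idempotent with $hR=bR$ and $eh=0$. Now $e_1:=e+h-he$ is idempotent --- the relation $eh=0$, equivalently $(1-e)h=h$, collapses the cross terms --- lies in $I$, satisfies $ee_1=e_1e=e$, and generates $e_1R=eR+bR=eR+aR$; since $a\in aR\subseteq e_1R$ we get $e_1a=a$. Transposing the construction and applying it to $c=a(1-e_1)$ (where $ce_1=0$) yields an idempotent $e_2\in I$ with $e_1e_2=e_2e_1=e_1$ and $Re_2=Re_1+Ra$, so that $ae_2=a$. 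Crucially, enlarging does not spoil what was already gained: from $e_2e_1=e_1$ and $e_1a=a$ one gets $e_2a=e_2e_1a=e_1a=a$. Hence $e_2$ both left- and right-fixes $a$ while containing $e$, and $e^\ast=e_2$ is as required.

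The main obstacle is precisely the compatibility of the two sides. The naive strategy --- produce a left local unit and a right local unit for the $x_i$ separately and add them --- breaks down, since a sum of idempotents need not be idempotent and, worse, the off-diagonal Peirce components of an $x_i$ relative to one idempotent are not governed by the other. The remedy is to grow a single idempotent monotonically: the orthogonalized combination $e+h-he$ keeps $e$ as a sub-idempotent ($ee^\ast=e^\ast e=e$), and it is exactly this monotonicity that lets the left-fixing property survive the later right-enlargement. Once $eh=0$ (and its transpose in the second pass) is secured, checking idempotency, membership in $I$, and the stated one-sided ideals is routine.
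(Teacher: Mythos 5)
Your proof is correct: the orthogonalization $e+h-he$ with $eh=0$ (and its transpose) is verified by a routine computation using $heh=h(eh)=0$, and the monotonicity relations $ee^{\ast}=e^{\ast}e=e$ do guarantee that previously captured elements remain two-sidedly fixed through the induction. The paper offers no argument of its own for this lemma, merely citing \cite[Lemma 13.1.19]{CH}; your two-pass, one-element-at-a-time construction is the standard proof of this local-unit property for regular ideals, so there is nothing to reconcile.
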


Recall that an ideal $I$ of a ring $R$ has stable range one
provided that $aR+bR=R$ with $a\in 1+I,b\in R\Longrightarrow
a+by\in U(R)$. It is known that a regular ideal $I$ has stable
range one if and only if $eRe$ is unit-regular for all idempotents
$e\in I$. Surprisingly, square stable ideals possess a similar characterization.

\begin{thm} \label{42} Let $I$ be a regular ideal of a ring $R$. Then the following are equivalent:
\end{thm}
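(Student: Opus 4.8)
The plan is to exploit that a regular ideal is automatically an exchange ideal, so that the apparatus of Section~3 — in particular Theorem~\ref{37} — applies, and to feed in the extra strength of regularity through Lemma~\ref{41}. Throughout, $(1)$, $(2)$, $(3)$ denote the three listed conditions.

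First I would settle $(1)\Leftrightarrow(2)$. Since $I$ is regular, every element of $I$ is regular, so Theorem~\ref{37} reduces $(1)$ to the statement that every $a\in I$ is strongly regular. To match this with $(2)$ I would use that strong regularity transfers between $R$ and a corner $eRe$ with $e\in I$ idempotent: if $w\in eRe$ then $ew=we=w$, whence $w\in w^2R\cap Rw^2$ forces $w\in w^2(eRe)\cap (eRe)w^2$, and conversely. Assuming $(1)$, every $w\in eRe\subseteq I$ is strongly regular in $eRe$, giving $(2)$; conversely, given $(2)$ and any $a\in I$, Lemma~\ref{41} places $a$ in some $eRe$ with $e\in I$, where $a$ is strongly regular, hence strongly regular in $R$, returning the hypothesis of Theorem~\ref{37}.

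For $(2)\Rightarrow(3)$ I would first note that a strongly regular ring is unit-regular~\cite{MM}, so by the characterization recalled just before the theorem, $(2)$ makes $I$ have stable range one. The auxiliary point is that, under $(2)$, every $a\in 1+I$ satisfies $aR=a^2R$: writing $a=(1-e)+\alpha$ with $\alpha=e+(a-1)\in eRe$ via Lemma~\ref{41}, strong regularity of $eRe$ gives $\alpha\in\alpha^2(eRe)$, and restoring the identity $1-e$ on the complementary corner yields $a\in a^2R$. Hence $aR+bR=R$ with $a\in 1+I$ upgrades to $a^2R+bR=aR+bR=R$; since $a^2\in 1+I$, stable range one of $I$ produces $y$ with $a^2+by\in U(R)$, which is exactly $(3)$.

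The step I expect to be the genuine obstacle is $(3)\Rightarrow(2)$, where strong regularity of the corners must be extracted from the purely \emph{squared} hypothesis $(3)$. I would argue contrapositively, using that a regular ring is strongly regular iff it is reduced, so it suffices that each $eRe$ (with $e\in I$ idempotent) be reduced. Suppose instead $w\in eRe$ with $w^2=0$ and $w\ne 0$; by Lemma~\ref{41} a von Neumann inverse $v$ of $w$ may be taken inside $eRe$, and $p:=wv$ is a nonzero idempotent with $p\le e$, $pw=w$, $wp=0$. The crux is the choice $a:=(1-e)+w\in 1+I$ together with $b:=e-p$: one computes $a^2=1-e$ (squaring annihilates the nilpotent), while $aR+bR=R$ because $a(1-e)=1-e$, $av=p$, and $p+(e-p)=e$. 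Yet for every $y$ one has $p(a^2+by)=p(1-e)+p(e-p)y=0$ with $p\ne0$, so $a^2+by$ can never be a unit, contradicting $(3)$. The delicate part is precisely engineering $(a,b)$ so that coprimality survives while a fixed nonzero sub-idempotent kills $a^2+by$ on the left; this is the abstract form of the obstruction $a=E_{12}$, $b=E_{21}$ that prevents $M_2(k)$ from having square stable range one. Once every corner is reduced, hence strongly regular, $(2)$ holds and the equivalences close.
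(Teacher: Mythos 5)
Your proposal is correct, and your reconstruction of the truncated list of conditions is reasonable: the paper's Theorem~\ref{42} itself asserts only the equivalence of your $(1)$ and $(2)$, while your condition $(3)$ is the content of the separate Theorem~\ref{44}; since you prove the three-way equivalence, nothing is lost. Your route is genuinely different from the paper's. For $(1)\Rightarrow(2)$ the paper shows each corner $eRe$ is reduced (if $x^2=0$ in $eRe$ then $x\in J(R)$ by Theorem~\ref{33}, and a regular element of $J(R)$ is zero), and for $(2)\Rightarrow(1)$ it places $a$ and $1-b$ in a common corner via Lemma~\ref{41}, invokes~\cite[Theorem 5.2]{KLW} that a strongly regular ring has square stable range one, and explicitly lifts the resulting unit of $eRe$ to $R$. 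You instead run both directions through Theorem~\ref{37} (square stable $\Leftrightarrow$ every regular element of $I$ is strongly regular) plus the transfer of strong regularity between $R$ and the corner; this is shorter and avoids the unit-lifting computation, at the cost of leaning on the harder Theorem~\ref{37} where the paper's forward direction needs only Theorem~\ref{33}. Your handling of $(3)$ also diverges from Theorem~\ref{44}: for $(2)\Rightarrow(3)$ you factor $a=a^2\big((1-e)+\beta\big)$ and quote stable range one of $I$, where the paper computes the inverse of $a^2+byz$ directly via a Peirce decomposition; for $(3)\Rightarrow(2)$ your contrapositive construction $a=(1-e)+w$, $b=e-p$ with $p(a^2+by)=0$ is a clean elementary obstruction, whereas the paper deduces square stability of $eRe$ from $(3)$ by a unit computation and then cites Theorem~\ref{42}. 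All your computations check ($a^2=1-e$, $av=p$, $p(e-p)=0$, and $a\in a^2R$ from $\alpha\in\alpha^2(eRe)$); the only cosmetic slip is crediting Lemma~\ref{41} with the fact that a von Neumann inverse of $w\in eRe$ may be chosen in $eRe$, which is just the substitution $v\mapsto eve$.
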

\begin{enumerate}
\item [(1)]{\it $I$ is square stable.}
\vspace{-.5mm}
\item [(2)]{\it $eRe$ is strongly regular for all idempotents $e\in I$.}
\end{enumerate}
\begin{proof} $(1)\Rightarrow (2)$ Let $e\in I$ be an idempotent.
Then $eRe$ is regular. If $x^2=0$ in $eRe$, then $x\in J(R)$ by
Theorem~\ref{33}. As $x\in I$ is regular, we have a $y\in I$ such
that $x=xyx$, and then $x(1-yx)=0$. This implies that $x=0$.
Hence, $eRe$ is reduced. As $eRe$ is regular, $eRe$ is strongly
regular.

$(2)\Rightarrow (1)$ Suppose that $ax+b=1$ with $a\in I, x,b\in
R$. Then $b\in 1+I$, and so $a,1-b\in I$. By view of
Lemma~\ref{41}, there exists an idempotent $e\in I$ such that
$a,1-b\in eRe$. Write $a=ea'e$ and $1-b=eb'e$. Then $ea'ex+b=1$,
and so $(ea'e)(exe)+ebe=e$. Since $eRe$ is strongly regular, by
virtue of ~\cite[Theorem 5.2]{KLW}, $eRe$ has square stable range one. Hence, there
exists a $y\in R$ such that $(ea'e)^2+ebeye\in U(eRe)$. Thus, we
have a $u\in R$ such that
$\big((ea'e)^2+ebeye\big)(eue)=(eue)\big((ea'e)^2+ebeye\big)=e.$
This shows that
$$\begin{array}{lll}
\big((ea'e)^2+ebeye+1-e)(eue+1-e)&=&(eue+1-e)\big((ea'e)^2+ebeye+1-e\big)\\
&=&e+(1-e)\\
&=&1.
\end{array}$$
Clearly, $b(1-e)=1-e$ and $be=e-ea'exe=ebe$, and then
$$\begin{array}{lll}
\big(a^2+b(eye+1-e)\big)(eue+1-e)&=&(eue+1-e)\big(a^2+b(eye+1-e)\big)\\
&=&1.
\end{array}$$
Therefore $a^2+b(eye+1-e)\in U(R)$, and the result follows.\end{proof}

\begin{cor} \label{43} Let $I$ be regular ideal of a ring $R$. Then $I$ is square stable if and only if $I$ is reduced.\end{cor}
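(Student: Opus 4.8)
The plan is to route everything through Theorem~\ref{42}, which already reduces square stability of the regular ideal $I$ to strong regularity of every corner $eRe$ with $e\in I$ an idempotent. So the corollary really amounts to matching the condition ``$eRe$ is strongly regular for all idempotent $e\in I$'' against ``$I$ is reduced.'' Before doing anything else I would record the elementary observation that, for our purposes, reducedness of $I$ (no nonzero nilpotents) is equivalent to $I$ containing no nonzero square-zero element: if $a^2=0\Rightarrow a=0$ holds on $I$ and $a\in I$ has $a^n=0$ with $n\ge 2$ minimal, then $(a^{n-1})^2=a^{2n-2}=0$ forces $a^{n-1}=0$, contradicting minimality. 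This lets me work only with square-zero elements throughout.

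For the forward direction, I would assume $I$ is square stable and take $a\in I$ with $a^2=0$. Then $a^2=0\in J(R)$, so Lemma~\ref{31} (equivalently Theorem~\ref{33}) gives $a\in J(R)$. Since $I$ is regular, I can pick $y\in I$ with $a=aya$; then $a(1-ya)=0$, and because $ya\in J(R)$ the element $1-ya$ is a unit, whence $a=0$. Thus $I$ is reduced. This is exactly the square-zero argument already used inside the proof of Theorem~\ref{42}, now applied to all of $I$ rather than to a single corner $eRe$.

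For the converse I would assume $I$ is reduced, fix an idempotent $e\in I$, and verify two things about the corner $eRe$. First, since $e\in I$ and $I$ is an ideal, $eRe\subseteq I$, so $eRe$ inherits reducedness. Second, $eRe$ is von Neumann regular: for $x=exe\in eRe$, regularity of $I$ supplies $y$ with $x=xyx$, and a short check gives $x=x(eye)x$ with $eye\in eRe$, so $eye$ is a regularity witness inside the corner. A reduced regular ring is strongly regular (its only nilpotent is $0$, forcing all idempotents to be central), so $eRe$ is strongly regular for every idempotent $e\in I$. Theorem~\ref{42} then yields that $I$ is square stable.

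The computations here are all short, so the only place needing genuine care is the converse, where one must (i) confirm that the corner $eRe$ is simultaneously regular and reduced, so that the criterion of Theorem~\ref{42} really applies, and (ii) invoke the classical fact that a reduced von Neumann regular ring is strongly regular. Neither point is deep, but together they constitute the actual content of the argument once Theorem~\ref{42} has been established.
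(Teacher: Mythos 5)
Your proof is correct, and the converse direction is essentially identical to the paper's: reducedness of $I$ passes to each corner $eRe$, which is also regular, hence strongly regular, and Theorem~\ref{42} finishes. The forward direction, however, takes a genuinely different (and arguably cleaner) route. The paper handles a square-zero element $x\in I$ by invoking Lemma~\ref{41} to find an idempotent $e\in I$ with $x\in eRe$, then applies Theorem~\ref{42} to see that $eRe$ is strongly regular, hence reduced, forcing $x=0$. You instead go directly through the radical: $x^2=0\in J(R)$ gives $x\in J(R)$ by Lemma~\ref{31}, and then regularity of $I$ yields $x=xyx$, so $x(1-yx)=0$ with $1-yx$ a unit, whence $x=0$. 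This is precisely the argument the paper uses \emph{inside} the proof of Theorem~\ref{42}$(1)\Rightarrow(2)$, applied to all of $I$ rather than to a single corner; it avoids the appeal to Lemma~\ref{41} entirely and does not reuse Theorem~\ref{42} in that direction. Your explicit remark that reducedness need only be checked on square-zero elements (via the minimal-index argument $(a^{n-1})^2=a^{2n-2}=0$ for $n\ge 2$) is a detail the paper leaves implicit, and it is a worthwhile addition since the paper's forward argument also only treats $x^2=0$.
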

\begin{proof} Suppose that $I$ is square stable. If $x^2=0$ with $x\in I$, then there exists some idempotent
$e\in I$ such that $x\in eRe$, by Lemma~\ref{37}. In view of
Theorem~\ref{42}, $eRe$ is strongly regular, hence, it is reduced.
This implies that $x=0$, as desired.

Conversely, assume that $I$ is reduced. Then $eRe$ is reduced for
all idempotent $e\in I$. Hence, $eRe$ is strongly regular.
Therefore $I$ is square stable, in terms of Theorem~\ref{42}.
\end{proof}

We now characterize strongly regular rings
in terms of square stable ideals.

\begin{cor} Let $R$ be a regular ring. Then $R$ is strongly regular if and only if
\end{cor}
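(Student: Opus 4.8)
The plan is to read the result off Corollary~\ref{43}, applied both to $R$ itself and to its ideals; the anticipated completion of the statement is that $R$ is strongly regular if and only if every ideal of $R$ is square stable. The whole argument rests on the fact that in a regular ring every two-sided ideal is again a regular ideal, so that Corollary~\ref{43}---which for regular ideals equates ``square stable'' with ``reduced''---is applicable to every ideal in sight.

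First I would verify this structural fact. Given a two-sided ideal $I$ and $a\in I$, regularity of $R$ provides $x\in R$ with $a=axa$; setting $y=xax$ gives $y\in I$ (as $I$ is two-sided) and $aya=axaxa=axa=a$, so every element of $I$ is regular with an inner inverse lying in $I$. Hence each ideal of $R$ is a regular ideal, and Corollary~\ref{43} applies to it.

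For the forward implication, assume $R$ is strongly regular, so that $R$ is reduced. Every subring, in particular every ideal $I$ of $R$, is then reduced; being also a regular ideal by the previous step, $I$ is square stable by Corollary~\ref{43}. For the converse it suffices to invoke the hypothesis for the single ideal $I=R$: the regular ring $R$ is a regular ideal of itself, so if it is square stable then Corollary~\ref{43} forces $R$ to be reduced. (Thus the essential content is just Corollary~\ref{43} with $I=R$, and the ``every ideal'' phrasing only adds the trivial remark that subrings of reduced rings are reduced.)

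The only step requiring genuine work is the final passage from ``reduced regular'' to ``strongly regular.'' Here I would use that a reduced ring has central idempotents: for $a\in R$ write $a=axa$, so that $e=ax$ and $f=xa$ are idempotents and hence central; then $a=ae=a(ax)=a^{2}x\in a^{2}R$ and $a=fa=(xa)a=xa^{2}\in Ra^{2}$, giving $a\in a^{2}R\cap Ra^{2}$. Thus $R$ is strongly regular, which completes the converse. I expect the centrality of idempotents in a reduced ring to be the main (and only nontrivial) obstacle, although it is standard; everything else is a direct appeal to Corollary~\ref{43}.
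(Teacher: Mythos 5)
Your proposal proves a different statement from the one the paper actually asserts. The corollary's ``if and only if'' is completed by the three enumerated conditions that follow it in the source: for a (fixed) ideal $I$ of the regular ring $R$, (1) $I$ is square stable, (2) $R/I$ is strongly regular, and (3) every unit of $R/I$ lifts to a unit of $R$. You guessed the completion to be ``every ideal of $R$ is square stable,'' and while the statement you then prove is true (and your argument for it --- every ideal of a regular ring is a regular ideal, Corollary~\ref{43} reduces square stability to reducedness, and a reduced regular ring is strongly regular via central idempotents --- is essentially sound), it is not the theorem in question. In particular, conditions (2) and (3), which carry most of the content, are never addressed.

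Concretely, the hard direction of the paper's corollary is the converse: assuming (1)--(3), one must show $R$ itself has square stable range one. The paper does this by a lifting argument: given $ax+b=1$, pass to $R/I$, use square stable range one of the strongly regular quotient to get $\overline{a^2+by}\in U(R/I)$, lift that unit to $U(R)$ using (3), observe that the resulting element is a unit modulo $I$ and generates a comaximal pair, and then use that the square stable regular ideal $I$ has stable range one (via Theorem~\ref{42}, since $eRe$ is strongly regular, hence unit-regular, for all idempotents $e\in I$) to correct it to a genuine unit of $R$; finally \cite[Theorem 5.4]{KLW} converts square stable range one of the regular ring $R$ into strong regularity. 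None of this two-step ``quotient then ideal'' structure appears in your proposal, so as a proof of the stated corollary it has a genuine gap: the entire converse implication is missing, and the forward implication omits the verification of (2) and (3).
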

\begin{enumerate}
\item [(1)]{\it $I$ is square stable;}
\vspace{-.5mm}
\item [(2)]{\it $R/I$ is strongly regular;}
\vspace{-.5mm}
\item [(3)]{\it Every units of $R/I$ lifts to a unit of $R$.}
\end{enumerate}
\begin{proof} Suppose that $R$ is strongly regular. Then for any
idempotent $e\in I$, $eRe$ is strongly regular. In view of
Theorem~\ref{42}, $I$ is square stable. $(2)$ is obvious. Clearly,
$R$ is unit-regular. If $\overline{xy}=\overline{1}$. Then $x=xux$ for a $u\in U(R)$. Hence, $\overline{x}=\overline{u}^{-1}$.
$(3)$ holds.

Conversely, assume that $(1)-(3)$ hold. Given $ax+b=1$ with
$a,x,b\in R$, then $\overline{ax+b}=\overline{1}$ in $R/I$. By
$(2)$, $R/I$ has square stable range one, and then so does $R/I$. Thus, there exists a $y\in R$ such that
$\overline{a^2+by}\in U(R/I)$. By $(3)$, we have a $u\in U(R)$
such that $\overline{a^2+by}=\overline{u}$. Hence, $(a^2+by)u-1\in
I$. This shows that
$\big((a^2+by)u\big)\big(u^{-1}x\big)+b(1-yx)=1.$
Since $I$ is square stable, $eRe$ is strongly regular for all
idempotent $e\in I$. Hence, $I$ has stable range one. Thus, we can
find a $z\in R$ such that $(a^2+by)u+b(1-yx)z\in U(R);$ that is,
$a^2+b\big((1-yx)z+yu\big)\in U(R)$. Therefore $R$ has square range one.
In light of ~\cite[Theorem 5.4]{KLW}, $R$ is strongly regular.
\end{proof}

We now come to the main result of this section.

\begin{thm} \label{44} Let $I$ be a regular ideal of a ring $R$. Then the following are equivalent:
\end{thm}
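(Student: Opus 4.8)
The equivalence of (1) and (2) is exactly Theorem~\ref{42}, so my plan is to graft on condition (3)---the variant demanding $a\in 1+I$ instead of $a\in I$---by proving $(2)\Rightarrow(3)$ and $(3)\Rightarrow(2)$, which closes the loop with Theorem~\ref{42}. Throughout I would use the standing consequence of (2) recorded before Theorem~\ref{42}: since a strongly regular element is unit-regular, (2) forces every corner $eRe$ (with $e\in I$ idempotent) to be unit-regular, whence $I$ has stable range one; that is, $cR+dR=R$ with $c\in 1+I,\,d\in R$ already yields $c+dz\in U(R)$ for some $z$.

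For $(2)\Rightarrow(3)$ the one extra ingredient I need is the \emph{Claim} that, under (2), every $a\in 1+I$ satisfies $aR=a^2R$. To prove it, write $a=1+c$ with $c\in I$ and invoke Lemma~\ref{41} to get an idempotent $e\in I$ with $c\in eRe$. Then $a=(1-e)+a_0$ with $a_0:=e+c\in eRe$, a block-diagonal splitting in which $a$ restricts to the identity on $(1-e)R(1-e)$ and to $a_0$ on $eRe$; hence $a^2=(1-e)+a_0^2$. Since $eRe$ is strongly regular, $a_0\in a_0^2\,eRe$, say $a_0=a_0^2r_0$ with $r_0\in eRe$, and a direct check gives $a=a^2\bigl(r_0+(1-e)\bigr)$, so $a\in a^2R$ and $aR=a^2R$. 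Granting the Claim, $(2)\Rightarrow(3)$ is immediate: given $aR+bR=R$ with $a\in 1+I$, we get $a^2R+bR=aR+bR=R$, and since $a^2\in 1+I$, the stable range one property of $I$ supplies a $y$ with $a^2+by\in U(R)$.

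For $(3)\Rightarrow(2)$ I would fix an idempotent $e\in I$ and show that the (regular) ring $eRe$ has square stable range one; then $eRe$ is strongly regular by \cite[Theorem 5.4]{KLW}. Starting from $a_0,b_0\in eRe$ with $a_0s_0+b_0t_0=e$ for some $s_0,t_0\in eRe$, I lift to $a:=a_0+(1-e)\in 1+I$ and $b:=b_0$. One checks $a(1-e)=1-e$ and $as_0+bt_0=e$, so $aR+bR=R$; applying (3) produces $u:=a^2+by=a_0^2+(1-e)+b_0y\in U(R)$. With respect to the Peirce decomposition at $e$, this $u$ is block upper-triangular with lower-right corner $1-e$, so invertibility of $u$ in $R$ forces its $e$-corner $eue=a_0^2+b_0(eye)\in U(eRe)$; writing $w=eye\in eRe$ gives $a_0^2+b_0w\in U(eRe)$, i.e.\ square stable range one for $eRe$.

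The main obstacle is precisely the stray $1$ in ``$a\in 1+I$'': unlike the square stable hypothesis of Theorem~\ref{42}, here $a$ itself cannot be pushed into a corner of $I$. The two maneuvers that absorb this are the load-bearing steps---(i) the block-diagonal splitting $a=(1-e)+a_0$, which lets strong regularity of $eRe$ upgrade $aR+bR=R$ to $a^2R+bR=R$, and (ii) the block-triangular extraction of the corner unit $eue$ from the global unit $u$. Everything else (verifying $a^2\in 1+I$, the comaximality identities, and that $w\in eRe$) is routine bookkeeping.
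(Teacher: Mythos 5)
Your proof is correct, and in one direction it takes a genuinely different route from the paper's. Both arguments really establish that ``$eRe$ is strongly regular for all idempotents $e\in I$'' is equivalent to condition (2), and then close the loop with Theorem~\ref{42}, so your relabelling into a three-part equivalence is harmless. For the implication from strong regularity of the corners to (2), the paper works directly with the given comaximal pair: it writes $ax+by=1-a$, pushes $1-a$ into a corner $eRe$, extracts the relation $(eae)(e+exe)+ebye=e$, applies square stable range one of $eRe$ to get a unit $v\in U(eRe)$, and then verifies by an explicit computation that $a^2+byz=v+w+(1-e)$ is a unit of $R$ with inverse $v^{-1}-wv^{-1}+1-e$. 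You instead prove the cleaner intermediate fact that every $a\in 1+I$ satisfies $aR=a^2R$ (via the Peirce splitting $a=(1-e)+a_0$ with $a_0=e+c\in eRe$ and strong regularity of $a_0$; the verification $a=a^2\bigl(r_0+(1-e)\bigr)$ checks out), observe $a^2\in 1+I$, and invoke the fact---stated as known in the paper just before Theorem~\ref{42}---that a regular ideal whose corners are unit-regular has stable range one. This buys a shorter argument with no explicit inverse construction, at the price of leaning on that external stable-range-one fact, which the paper quotes but does not actually use in its own proof of this theorem. Your converse direction (lifting $a_0\in eRe$ to $a_0+(1-e)\in 1+I$ and extracting the $e$-corner of the resulting global unit) is essentially the paper's $(2)\Rightarrow(1)$, only phrased in block-triangular language; like the paper, it needs the final step that a regular ring with square stable range one is strongly regular, which you cite explicitly via \cite{KLW} and the paper leaves implicit in its appeal to Theorem~\ref{42}.
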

\begin{enumerate}
\item [(1)]{\it $I$ is square stable;}
\vspace{-.5mm}
\item [(2)]{\it $aR+bR=R$ with $a\in 1+I,b\in R\Longrightarrow a^2+by\in U(R)$ for a $y\in R$.}
\end{enumerate}
\begin{proof} $(1)\Rightarrow (2)$ Given $aR+bR=R$ with $a\in 1+I,b\in R$, then we have $x,y\in R$ such that
$ax+by=1-a$. As $a-1\in I$, there exists an idempotent $e\in I$ such that $1-a=(1-a)e$. Hence,
$e-ae=axe+bye$. Clearly, $a(1-e)=1-e$, and so one easily checks that
$$\begin{array}{lll}
(eae)\big(e+exe)+ebye&=&eae(1+x)e+ebye\\
&=&ea\big(e+(1-e)\big)(1+x)e+ebye\\
&=&e\big(a(1+x)+by\big)e\\
&=&e.
\end{array}$$
Since $I$ is square stable, by virtue of Theorem~\ref{42}, $eRe$
is strongly regular. Thus, we have a $z\in eRe$ such that
$v:=(eae)^2+ebyez\in U(eRe)$. Let $w=(1-e)a^2e+(1-e)byz$.
Obviously, $(eae)^2=eaea=e\big(ae+a(1-e)\big)ae=ea^2e$, and that
$a^2(1-e)=a(1-e)=1-e$, and so $v=e(a^2+byz)e$. Hence,
$v+w=(a^2+byz)e$ and $1-e=(a^2+byz)(1-e)$. This shows that
$a^2+byz=v+w+1-e$. Clearly, $vw=v^{-1}w=w^2=w(1-e)=0$ and
$(1-e)w=w$. Hence, we check that
$(v+w+1-e)^{-1}=v^{-1}-wv^{-1}+1-e$, and then
$\big(a^2+byz\big)^{-1}=v^{-1}-wv^{-1}+1-e.$
Therefore, $a^2+byz\in U(R)$, as required.

$(2)\Rightarrow (1)$ Let $e\in I$ be an idempotent. Given $ax+b=e$ with $a,x,b\in eRe$, then $(a+1-e)(x+1-e)+b=1$ with $a+1-e\in 1+I$.
By hypothesis, we can find a $y\in R$ such that $u:=(a+1-e)^2+by\in U(R)$. This shows that
$u^{-1}\big((a+1-e)^2+by\big)=\big((a+1-e)^2+by\big)u^{-1}=1.$
As $(a+1-e)^2=a^2+1-e$, we see that $(1-e)u^{-1}=1-e$, and so $u^{-1}e=eu^{-1}e$. Therefore we have
$(eu^{-1}e)\big(a^2+b(eye)\big)=\big(a^2+b(eye)\big)(eu^{-1}e)=e.$ Accordingly, $a^2+b(eye)\in U(eRe)$. That is, $eRe$ is square stable.
In light of Theorem~\ref{42}, $I$ is square stable, hence the result.\end{proof}

\begin{cor} \label{45} Let $I$ be a regular ideal of a ring $R$. Then the following are equivalent:
\end{cor}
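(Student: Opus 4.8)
The plan is to obtain this corollary purely by assembling the equivalences already proved for regular ideals, so that almost no new computation is needed. Since $I$ is regular it is in particular an exchange ideal, and three previous results already pin down square stability from different angles: Theorem~\ref{42} identifies it with ``$eRe$ is strongly regular for every idempotent $e\in I$,'' Corollary~\ref{43} identifies it with ``$I$ is reduced,'' and Theorem~\ref{44} identifies it with the implication that $aR+bR=R$ with $a\in 1+I$ forces $a^2+by\in U(R)$ for some $y\in R$. My first step would therefore be to fix the statement ``$I$ is square stable'' as the hub and to prove each remaining listed condition equivalent to it by quoting the appropriate one of these results rather than re-deriving it; the corollary then follows by transitivity through the hub.

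For the corner-ring condition I would invoke Theorem~\ref{42} verbatim; for any reducedness or element-wise strong-regularity condition I would invoke Corollary~\ref{43}, using that a regular ideal is reduced if and only if it is strongly regular; and for the ``$a\in 1+I$'' unit condition I would invoke Theorem~\ref{44}. The structural tool that makes these passages routine is Lemma~\ref{41}: given finitely many elements of $I$, it embeds them in a single corner $eRe$ with $e\in I$ an idempotent, so every assertion about $I$ can be localized to a corner ring that, under square stability, is strongly regular, hence abelian, reduced, and unit-regular. In such a corner the unit-regular lifting of Lemma~\ref{32} and the fact that a strongly regular ring has square stable range one (\cite[Theorem 5.2]{KLW}) are available, which is precisely the machinery used inside Theorems~\ref{42} and~\ref{44}.

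The step I expect to require the most care is reconciling whatever precise form the remaining condition takes with this corner criterion. If the condition is phrased ring-theoretically for $R$, or one-sidedly, or as a matrix/$GL_n$ statement, rather than directly for the corners $eRe$, then the content is no longer a literal quotation: I would have to translate between invertibility of a single element and invertibility of a corner element (or matrix), and check that the idempotent $e$ furnished by Lemma~\ref{41} absorbs the relevant data so that an inverse built inside $eRe$ lifts to a genuine unit of $R$ through the orthogonal decomposition $1=e+(1-e)$, exactly as in the closing displays of Theorem~\ref{44}. Modulo that bookkeeping the corollary is a formal consequence of the section's main results.
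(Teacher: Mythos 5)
The conditions you could not see are: (2) every element of $I$ is strongly regular, and (3) every element of $1+I$ is strongly regular. Your hub strategy handles (1)$\Leftrightarrow$(2) adequately --- the paper quotes Theorem~\ref{37} directly (square stable $\Leftrightarrow$ every regular element of $I$ is strongly regular, and in a regular ideal every element is regular), while your detour through Corollary~\ref{43} and the corner localization of Lemma~\ref{41} reaches the same place with slightly more work. But condition (3) is a genuine gap in your proposal, and not the kind of bookkeeping you anticipated. Your entire toolkit localizes elements \emph{of $I$} into corners $eRe$ with $e\in I$ an idempotent; an element $a\in 1+I$ does not lie in $I$ and is not captured by Lemma~\ref{41}, nor is ``every element of $1+I$ is strongly regular'' any of the conditions appearing in Theorems~\ref{42}, \ref{44} or Corollary~\ref{43}, so there is nothing to quote.

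The missing idea is the passage from $a$ to $a-a^2=a(1-a)\in I$. For $(1)\Rightarrow(3)$ the paper notes that $a-a^2$ lies in the regular ideal $I$, hence is regular, hence strongly regular by Theorem~\ref{37}; writing $a-a^2=(a-a^2)^2x=y(a-a^2)^2$ and expanding gives $a=a^2\bigl(1+(1-a)^2x\bigr)=\bigl(y(1-a)^2+1\bigr)a^2$, so $a$ itself is strongly regular. For $(3)\Rightarrow(1)$ one again uses regularity of $a-a^2$ to see $a$ is regular, invokes the hypothesis to get $a$ strongly regular, applies \cite[Theorem 5.2]{KLW} to produce $y$ with $a^2+by\in U(R)$, and then concludes via Theorem~\ref{44}. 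This algebraic transfer of strong regularity between $a$ and $a(1-a)$ is the crux of the equivalence with (3), and it does not follow from lifting inverses through the decomposition $1=e+(1-e)$ as you proposed.
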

\begin{enumerate}
\item [(1)]{\it $I$ is square stable;}
\vspace{-.5mm}
\item [(2)]{\it Every element in $I$ is strongly regular.}
\vspace{-.5mm}
\item [(3)]{\it Every element in $1+I$ is strongly regular.}
\end{enumerate}
\begin{proof} $(1)\Leftrightarrow (2)$ As every regular ideal is
an exchange ideal, this is obvious by Theorem~\ref{37}.

$(1)\Rightarrow (3)$ Let $a\in 1+I$. Then $a-a^2\in I$. Since $I$ is regular, we see
that $a-a^2\in I$ is regular. Clearly, $I$ is an exchange ideal of
$R$. In view of Theorem~\ref{37}, $a-a^2\in R$ is strongly
regular. So $a-a^2=(a-a^2)^2x=y(a-a^2)^2$ for some $x,y\in R$, and
then $a=a^2\big(1+(1-a)^2x\big)=\big(y(1-a)^2+1\big)a^2$.
Therefore $a\in R$ is strongly regular.

$(3)\Rightarrow (1)$ Suppose that $ax+b$ with $a\in 1+I,x,b\in R$.
Then $a-a^2\in I$ is regular.
Thus, we can find some $x\in R$ such that
$a-a^2=(a-a^2)z(a-a^2)$. Hence, $a=a\big(a+(1-a)z(1-a)\big)a$, i.e., $a\in R$ is regular.
By hypothesis, $a\in R$ is strongly regular.
In view of ~\cite[Theorem 5.2]{KLW}, there exists a $y\in R$ such that $a^2+by\in U(R)$. This completes the proof, by Theorem~\ref{44}.
\end{proof}

Huanyin Chen

Department of Mathematics

Hangzhou Normal University

Hangzhou, 310036, China

Email: huanyinchen@aliyun.com\\

Marjan Sheibani Abdolyousefi

Department of Mathematics

Semnan University, Semann, Iran

Email: m.sheibani1@gmail.com\\

\end{document}